\newtheorem{theorem}{Theorem}   
\newtheorem{lemma}{Lemma}
\newtheorem{proposition}{Proposition}
\newtheorem{example}{Example}
\newtheorem{remark}{Remark}
\DeclareMathOperator{\rank}{rank}
\DeclareMathOperator{\im}{im}
\DeclareMathOperator{\leftker}{leftker}
\newcommand{\norm}[1]{\left\lVert#1\right\rVert}
\newcommand{\bmx}{\mathbf{x}}
\newcommand{\bmy}{\mathbf{y}}
\newcommand{\calH}{\mathcal{H}}                       
\title{Beyond Persistent Excitation: Online Experiment Design for Data-Driven Modeling and Control
}
\author{Henk J. van Waarde
\thanks{The author is with the Control Group, Department of Engineering, University of Cambridge, Trumpington Street, Cambridge CB2 1PZ, UK. Email: {\tt\small hv280@cam.ac.uk}.
}%
}
\begin{document}

\maketitle
\thispagestyle{empty}
\pagestyle{empty}

\begin{abstract}
This paper presents a new experiment design method for data-driven modeling and control. The idea is to select inputs \emph{online} (using past input/output data), leading to desirable rank properties of data Hankel matrices. In comparison to the classical persistency of excitation condition, this online approach requires less data samples and is even shown to be completely sample efficient.
\end{abstract}

\begin{IEEEkeywords}
identification, linear systems.
\end{IEEEkeywords}

\section{Introduction}

\IEEEPARstart{R}{ecently}, there has been an increasing interest in the direct design of controllers using data \cite{Markovsky2008,Maupong2017,Coulson2019,DePersis2020,
vanWaarde2020,Berberich2020,vanWaarde2020d,Coulson2020,Xue2020,
vanWaarde2020b,Guo2020,Dai2021}. Several contributions study how controllers can be obtained from a given batch of (informative) data, even in the presence of noise \cite{DePersis2020,Berberich2020,vanWaarde2020d} and for classes of nonlinear systems \cite{Guo2020,Dai2021}. The question of how to \emph{obtain} such informative data sets, however, is largely open. For data-driven control to become an end-to-end solution, there is a need for new \emph{experiment design} methods to empower the data-based design. This is true especially for settings including noise and nonlinear dynamics. However, even for linear systems with exact data, current experiment design methods are not sample efficient.  

In this paper we will explore a new idea for designing experiments for data-driven modeling and control. Experiment design is a classical problem that has been mostly studied in the parametric identification literature. An established idea is to optimize a measure of the expected accuracy of the parameter estimates subject to input power constraints \cite{Goodwin1977,Gevers1986}. This problem is usually tackled in the frequency domain and convex formulations have been provided in \cite{Jansson2005}. The dual problem of finding the ``least costly" input achieving a fixed level of parameter accuracy has also been studied \cite{Bombois2006}, in a closed-loop setting. 

Less results are known in the context of non-parametric methods. In this area, a state-of-the-art result for linear time-invariant systems is \emph{Willems et al.'s fundamental lemma} \cite{Willems2005}. The idea behind this method is to select an input that is persistently exciting, which implies that a Hankel matrix of measured inputs and outputs satisfies a rank condition. This rank property is important, since it guarantees that \emph{all} trajectories of the system can be parameterized in terms of the measured trajectory. Essentially, the Hankel matrix of measured inputs and outputs serves as a non-parametric model of the system. This idea is simple yet powerful, and has been successfully employed in a number of recent publications on data-driven simulation and control, see e.g. \cite{Markovsky2008,Maupong2017,
Coulson2019}.

Despite its elegance and clear impact on data-driven methods\-, the fundamental lemma has some limitations. 
\vspace{-2pt}
\begin{enumerate}
\item It is not \emph{sample efficient} in the sense that it uses more data than strictly necessary. This is because the lemma works with inputs that are persistently exciting of a certain order, which imposes a conservative lower bound on the number of samples. Large data sets are challenging from a computational point of view, especially for the real-time implementation of controllers, see e.g. \cite{Fabiani2020}.
\item It is not applicable to \emph{noisy data}. A possible approach to deal with this is to perform approximate data-driven simulation and control in a maximum likelihood framework \cite{Yin2020}, for which an experiment design method was developed in \cite{Ianelli2020b}. Another line of work establishes data-driven control design methods with guaranteed stability and performance in the presence of bounded process noise \cite{DePersis2020,Berberich2020,vanWaarde2020d}. However, for this, experiment design methods are missing and it is unclear whether persistency of excitation is helpful in this context.
\end{enumerate}
\vspace{-2pt}

The purpose of this paper is to introduce another angle of attack to experiment design. In contrast to Willems' fundamental lemma, we do not use persistently exciting inputs, but instead design the inputs \emph{online}. This means that at each time step, the input is selected on the basis of inputs and outputs that have been collected at previous time steps. Such an online approach is natural because the first samples of an experiment already contain valuable partial information about the system, which can be exploited in the design of the remaining samples. Online approaches appear in various contexts in systems and control. We mention iterative feedback tuning \cite{Hjalmarsson1998} that utilizes repeated (closed-loop) experiments, contributions to dual control \cite{Filatov2004,Marafioti2014,Larsson2016} that trade-off exploration and exploitation, and adaptive experiment design \cite{Lindqvist2001,Gerencser2017} where optimal experiment design is combined with adaptive parameter estimation. Nonetheless, in the context of Willems' lemma, online input design has not received any attention.

As our main contributions, we provide online experiment design methods for both input/state and input/output systems. In both settings, we formally prove that the methods are sample efficient. This completely resolves the problem in 1), and thus shows an advantage of online experiment design over the classical persistency of excitation condition. Analogous to the fundamental lemma, our technical results are presented for noise-free data.  The extension to noisy data is left for future work, and some ideas for this will be discussed in Section~\ref{sec:conc}.  

\subsection{Notation and terminology}

\noindent 
The \emph{left kernel} of a real matrix $M$ is denoted by $\leftker M$. Consider a signal $f: \mathbb{Z} \to \mathbb{R}^\bullet$ and let $i,j \in \mathbb{Z}$ be integers such that $i \leq j$. We denote by $f_{[i,j]}$ the restriction of $f$ to the interval $[i,j]$, that is, 
\begin{equation*}
    f_{[i,j]} := \begin{bmatrix} f(i)^\top & f(i+1)^\top & \cdots & f(j)^\top
    \end{bmatrix}^\top.
\end{equation*}
With slight abuse of notation, we will also use the notation $f_{[i,j]}$ to refer to the sequence $f(i),f(i+1),\dots,f(j)$. 
Let $k$ be a positive integer such that $k \leq j-i+1$ and define the \emph{Hankel matrix} of depth $k$, associated with $f_{[i,j]}$, as
\begin{equation*}
    \mathcal{H}_{k}(f_{[i,j]}) := 
    \begingroup 
    \setlength\arraycolsep{2pt}
    \begin{bmatrix} f(i) & f(i+1) & \cdots & f(j-k+1) \\
    f(i+1) & f(i+2) & \cdots & f(j-k+2) \\
    \vdots & \vdots & & \vdots \\
    f(i+k-1) & f(i+k) & \cdots & f(j) 
    \end{bmatrix}.
    \endgroup
\end{equation*}
Note that the subscript $k$ refers to the number of block rows of $\mathcal{H}_k$. The sequence $f_{[i,j]}$ is called \emph{persistently exciting of order $k$} 
if $\mathcal{H}_{k}(f_{[i,j]})$ has full row rank.

\section{Rank conditions on data matrices}
\label{sec:rank}

Consider the linear time-invariant (LTI) system
\begin{subequations}\label{system}
\begin{align} 
\mathbf{x}(t+1) &= A\mathbf{x}(t) + B\mathbf{u}(t) \label{systema} \\
\mathbf{y}(t) &= C\mathbf{x}(t) + D \mathbf{u}(t), \label{systemb}
\end{align}
\end{subequations}
where $\mathbf{x} \in \mathbb{R}^n$ denotes the state, $\mathbf{u} \in \mathbb{R}^m$ is the input and $\mathbf{y} \in \mathbb{R}^p$ is the output. Throughout the paper, we will assume that \eqref{system} is \emph{minimal}, i.e., the pair $(A,B)$ is controllable and $(C,A)$ is observable.  The \emph{lag} $\ell$ of \eqref{system} can be defined as the smallest integer $i$ for which the observability matrix 
\begin{equation}
\label{observabilitymatrix}
\mathcal{O}_i := \begin{bmatrix}
	C^\top & (CA)^\top & (CA^2)^\top & \cdots & (CA^{i-1})^\top \end{bmatrix}^\top
\end{equation}
has rank $n$. Now, let $(u_{[0,T-1]},y_{[0,T-1]})$ be an input/output trajectory of \eqref{system} of length $T \geq \ell$. In addition, let $L \geq \ell$ be an integer. The main goal of this article is to provide a new method to design the input sequence $u_{[0,T-1]}$ so that the resulting input/output Hankel matrix 
\begin{equation}
\label{inputoutputHankel}
    \begin{bmatrix}
\mathcal{H}_{L}(y_{[0,T-1]}) \\                
                \mathcal{H}_{L}(u_{[0,T-1]}) 
\end{bmatrix} = \begingroup 
    \setlength\arraycolsep{2pt}
    \begin{bmatrix} 
    y(0) & y(1) & \cdots & y(T-L) \\
    \vdots & \vdots & & \vdots \\
    y(L-1) & y(L) & \cdots & y(T-1) \\
    u(0) & u(1) & \cdots & u(T-L) \\
    \vdots & \vdots & & \vdots \\
    u(L-1) & u(L) & \cdots & u(T-1)   
    \end{bmatrix}
    \endgroup
\end{equation}
has rank $n+mL$. This rank condition plays a fundamental role in modeling and control using data. Indeed, it implies that any length-$L$ input/output trajectory of \eqref{system} can be obtained from \eqref{inputoutputHankel}. More specifically, if \eqref{inputoutputHankel} has rank $n+mL$ then $(\bar{u}_{[0,L-1]},\bar{y}_{[0,L-1]})$ is an input/output trajectory of \eqref{system} if and only if 
\begin{equation*}
    \begin{bmatrix}
      \bar{y}_{[0,L-1]} \\  \bar{u}_{[0,L-1]} 
    \end{bmatrix} \in \im \begin{bmatrix}
				\mathcal{H}_{L}(y_{[0,T-1]}) \\                
                \mathcal{H}_{L}(u_{[0,T-1]}) 
    \end{bmatrix}.
\end{equation*}
This parameterization of input/output trajectories has been used extensively to simulate and control dynamical systems, see e.g., \cite{Markovsky2008,Maupong2017,Coulson2019}. If $L > \ell$, the rank condition on \eqref{inputoutputHankel} even implies that \emph{all} input/output trajectories of \eqref{system} (not just those of length $L$) can be obtained from $(u_{[0,T-1]},y_{[0,T-1]})$, see \cite{Markovsky2005b}. In this case, the input/output behavior of \eqref{system} is \emph{identifiable} \cite{Markovsky2020} and $(A,B,C,D)$ can be computed up to similarity transformation, e.g., using subspace methods \cite{Verhaegen2007}.

A notable special case is that of \emph{full state measurement}, i.e., $\bmy(t) = \bmx(t)$ and $L = \ell = 1$. In this case, \eqref{inputoutputHankel} reduces to 
\begin{equation}
\label{ismatrix}
\begin{bmatrix}
\calH_1(x_{[0,T-1]}) \\ \calH_1(u_{[0,T-1]})
\end{bmatrix} = \begin{bmatrix}
x(0) & x(1) & \cdots & x(T-1) \\
u(0) & u(1) & \cdots & u(T-1)
\end{bmatrix}.
\end{equation}
If \eqref{ismatrix} has rank $n+m$, the input/state trajectory $(u_{[0,T-1]},x_{[0,T]})$ fully captures the behavior of \eqref{systema} which allows the unique identification of $A$ and $B$.

\subsection{Recap of Willems et al.'s fundamental lemma}
\label{sectionWillemslemma}

Willems et al.'s fundamental lemma \cite{Willems2005} is an important \emph{experiment design} result. It reveals that the rank condition on \eqref{inputoutputHankel} is satisfied if the input sequence is chosen to be persistently exciting. In a state-space setting, this result can be stated as follows \cite{Willems2005,vanWaarde2020c}.

\begin{proposition}
\label{theoremWillems}
Consider the minimal system \eqref{system}. Let $(u_{[0,T-1]},y_{[0,T-1]})$ be an input/output trajectory of \eqref{system} and let $L \geq \ell$. If the input $u_{[0,T-1]}$ is persistently exciting of order $n+L$ then \eqref{inputoutputHankel} has rank $n+mL$.
\end{proposition}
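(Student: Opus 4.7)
The plan is to reduce the rank claim to a question about a state/input Hankel matrix via a system-theoretic factorization, and then finish by combining a Cayley--Hamilton cancellation with the persistency of excitation hypothesis and controllability.

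First, by iterating \eqref{systema} and substituting into \eqref{systemb}, one obtains the factorization
\begin{equation*}
\begin{bmatrix}\mathcal{H}_L(y_{[0,T-1]})\\ \mathcal{H}_L(u_{[0,T-1]})\end{bmatrix}
= \begin{bmatrix}\mathcal{O}_L & \mathcal{T}_L \\ 0 & I\end{bmatrix}
\begin{bmatrix}\mathcal{H}_1(x_{[0,T-L]})\\ \mathcal{H}_L(u_{[0,T-1]})\end{bmatrix},
\end{equation*}
where $\mathcal{T}_L$ is the block-Toeplitz matrix of Markov parameters. Since $L \geq \ell$ gives $\rank \mathcal{O}_L = n$, and the lower-right identity block decouples the two column groups, the left factor has full column rank $n+mL$. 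Hence the rank of \eqref{inputoutputHankel} equals that of the state/input matrix on the right, and it remains to show the latter has full row rank $n+mL$.

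Suppose for contradiction that some nonzero $[\alpha^\top\ \beta_0^\top\ \cdots\ \beta_{L-1}^\top]$ lies in its left kernel, i.e.
\begin{equation*}
\alpha^\top x(t) + \sum_{i=0}^{L-1} \beta_i^\top u(t+i) = 0, \qquad t = 0,\ldots,T-L.
\end{equation*}
I would apply this identity at the $n+1$ shifted times $t, t+1, \ldots, t+n$, expand each $x(t+k)$ via $x(t+k) = A^k x(t) + \sum_{j=0}^{k-1} A^{k-1-j} B u(t+j)$, and take the linear combination of these $n+1$ equations whose coefficients are those of the characteristic polynomial of $A$ (with leading coefficient one). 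By Cayley--Hamilton the $x(t)$-terms cancel, leaving $\gamma^\top u_{[t,t+n+L-1]} = 0$ for every feasible $t$, i.e.\ $\gamma^\top \mathcal{H}_{n+L}(u_{[0,T-1]}) = 0$ for some $\gamma \in \mathbb{R}^{m(n+L)}$ that depends linearly on $\alpha$ and $\beta$.

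Persistency of excitation of order $n+L$ now forces $\gamma = 0$. Inspecting its block structure, the last $L$ blocks of $\gamma$ involve only $\beta$ and form a triangular system with identity diagonal, which successively forces $\beta_{L-1}, \beta_{L-2}, \ldots, \beta_0$ to vanish. With $\beta = 0$, the first $n$ blocks then give $\alpha^\top A^k B = 0$ for $k = 0, \ldots, n-1$, and controllability of $(A,B)$ yields $\alpha = 0$, contradicting the assumed nonzero kernel vector. The main obstacle I anticipate is the combinatorial bookkeeping in this last step: one must verify that $\gamma$ really has the advertised block-triangular separation between its $\beta$-part and its $\alpha$-part, so that peeling proceeds in two clean stages rather than through a coupled system that would require a more delicate duality argument to disentangle.
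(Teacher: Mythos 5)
Your proposal is correct. Note that the paper itself does not prove Proposition~\ref{theoremWillems}; it is stated as a recap with a citation to \cite{Willems2005,vanWaarde2020c}, so the comparison is against those references rather than against an in-paper argument. Your first step --- the factorization through $\begin{bmatrix}\mathcal{O}_L & \mathcal{T}_L \\ 0 & I\end{bmatrix}$, which has full column rank since $L \geq \ell$, reducing everything to full row rank of $\begin{bmatrix}\mathcal{H}_1(x_{[0,T-L]})^\top & \mathcal{H}_L(u_{[0,T-1]})^\top\end{bmatrix}^\top$ --- is exactly the standard state-space route (it even appears, commented out, in the paper's source). Where you differ is in the final step: Willems et al.\ establish the state/input rank property via a dimension count on the left kernel of the deeper Hankel matrix of order $n+L$, whereas you construct an explicit annihilator $\gamma$ of $\mathcal{H}_{n+L}(u_{[0,T-1]})$ by taking the characteristic-polynomial combination of $n+1$ shifted kernel identities and invoking Cayley--Hamilton. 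The bookkeeping you worry about does work out: the blocks $\gamma_s$ for $s = n,\dots,n+L-1$ receive no contribution from the state expansion (the sum $\sum_{k=s+1}^{n}c_k\alpha^\top A^{k-1-s}B$ is empty there), and the monic leading coefficient $c_n=1$ puts $\beta_{s-n}$ on the diagonal, so peeling from $s=n+L-1$ downward kills $\beta_{L-1},\dots,\beta_0$ in order; then $s=n-1,n-2,\dots,0$ successively yield $\alpha^\top B = \alpha^\top AB = \cdots = \alpha^\top A^{n-1}B = 0$, and controllability gives $\alpha=0$. This shows the map $(\alpha,\beta)\mapsto\gamma$ is injective, so $\gamma=0$ genuinely contradicts $(\alpha,\beta)\neq 0$. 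What your version buys is a self-contained, purely matrix-algebraic proof with no behavioral or module-theoretic machinery; what it costs is the explicit combinatorics you identified, which the dimension-counting argument sidesteps.
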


We note that persistency of excitation of order $n+L$ requires at least $T \geq (m+1)(n+L)-1$ data samples.

\section{Online input design}
\label{sec:online}
The main contribution of the paper will be to provide a new input design technique to guarantee the rank property of \eqref{inputoutputHankel}. In contrast to Willems' fundamental lemma, this method will not rely on applying inputs that are persistently exciting of order $n+L$. Rather, each input $u(t)$ is selected \emph{online}, by making use of the previous samples $y(0),y(1),\dots,y(t-1)$ and $u(0),u(1),\dots,u(t-1)$, see Figure~\ref{fig:online}. 

\begin{figure}[h!]
\centering
\includegraphics[width=0.34\textwidth]{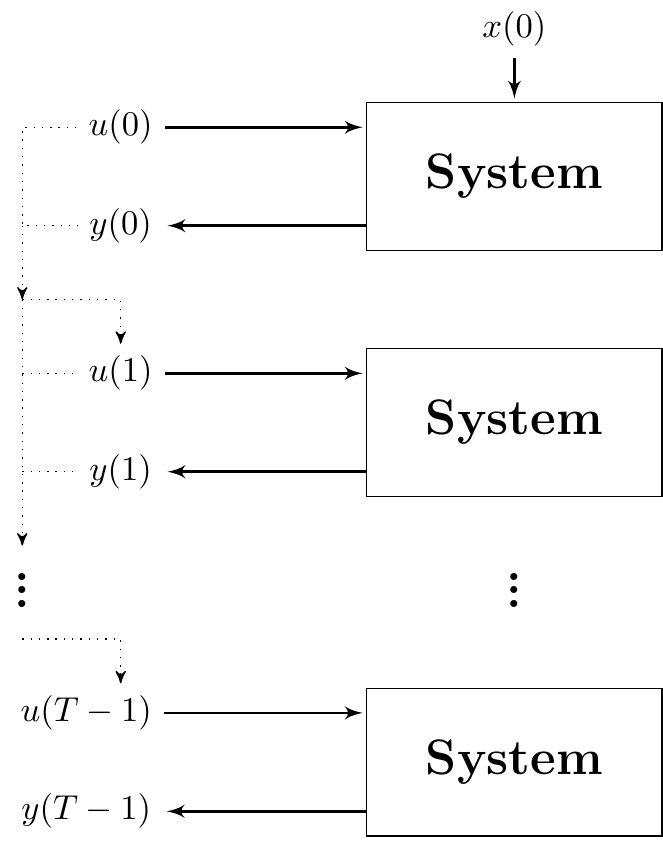}
\caption{Schematic representation of the online input design approach.}
\label{fig:online}
\end{figure}

\subsection{Input design for input/state systems}

To explain the idea, we will start with the simplest setting of \emph{input/state} data collected from system \eqref{systema}. In this setting, the purpose is to design a sequence of inputs $u(0),u(1),\dots,u(T-1)$ so that the matrix \eqref{ismatrix} has full row rank $n+m$. As mentioned, we opt for the \emph{online} design of the inputs. Specifically, this means that for the design of $u(t)$ we make use of the collected states\footnote{In this setting, we use $x(t)$ in the design since it is independent of $u(t)$.} $x(0),x(1),\dots,x(t)$ and inputs $u(0),u(1),\dots,u(t-1)$. The following theorem shows how such inputs and a time horizon $T$ can be designed.

\begin{theorem}
	\label{thmissys}
	Consider the controllable system \eqref{systema}. Define $T := n+m$. Select a nonzero $u(0) \in \mathbb{R}^m$, and design the input $u(t)$ (for $t = 1,2,\dots,T-1$) as follows:
	\begin{itemize}
		\item If $x(t) \not\in \im \calH_1(x_{[0,t-1]})$, select $u(t) \in \mathbb{R}^m$ arbitrarily. 
		\item If $x(t) \in \im \calH_1(x_{[0,t-1]})$ there exists a $\xi \in \mathbb{R}^n$ and a nonzero $\eta \in \mathbb{R}^m$ such that
		\begin{equation}
		\label{xieta}
		\begin{bmatrix} \xi^\top & \eta^\top \end{bmatrix} \begin{bmatrix}
		\calH_1(x_{[0,t-1]}) \\ \calH_1(u_{[0,t-1]})
		\end{bmatrix} = 0. 
		\end{equation}
		In this case, select $u(t)$ such that $\xi^\top x(t) + \eta^\top u(t) \neq 0$. 		
	\end{itemize}
	Then, we have that
	\begin{equation}
	\label{fullrank}
	\rank \begin{bmatrix}
	\calH_1(x_{[0,T-1]}) \\ \calH_1(u_{[0,T-1]})
	\end{bmatrix} = n+m.
	\end{equation}
\end{theorem}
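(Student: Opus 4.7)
The plan is induction on $t$ to establish $\rank \begin{bmatrix} \calH_1(x_{[0,t]}) \\ \calH_1(u_{[0,t]}) \end{bmatrix} = t+1$ for every $t = 0, 1, \dots, T-1$; since $T = n+m$ matches the row count, this delivers \eqref{fullrank}. The base case $t=0$ is immediate: the single column $\begin{bmatrix} x(0) \\ u(0) \end{bmatrix}$ is nonzero because $u(0)$ is nonzero.

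For the inductive step, I would assume $\rank \begin{bmatrix} X \\ U \end{bmatrix} = t$ with $X := \calH_1(x_{[0,t-1]})$, $U := \calH_1(u_{[0,t-1]})$, and argue that the freshly appended column $\begin{bmatrix} x(t) \\ u(t) \end{bmatrix}$ lies outside $\im \begin{bmatrix} X \\ U \end{bmatrix}$. In Case 1 ($x(t) \notin \im X$) this is automatic, since every element of $\im \begin{bmatrix} X \\ U \end{bmatrix}$ has its first $n$ coordinates in $\im X$. In Case 2, the covector $[\xi^\top \ \eta^\top]$ annihilates every column of $\begin{bmatrix} X \\ U \end{bmatrix}$ by \eqref{xieta}, while $\xi^\top x(t) + \eta^\top u(t) \neq 0$ by the choice of $u(t)$, which is possible because $\eta \neq 0$. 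Hence the new column falls outside the image and the rank jumps to $t+1$.

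The main obstacle is to verify that in Case 2 a covector with $\eta \neq 0$ actually exists in $\leftker \begin{bmatrix} X \\ U \end{bmatrix}$. I plan to argue by contradiction: if every left-kernel element had $\eta = 0$, then $\leftker \begin{bmatrix} X \\ U \end{bmatrix} = \leftker X \times \{0\}$, and taking orthogonal complements in $\mathbb{R}^{n+m}$ gives $\im \begin{bmatrix} X \\ U \end{bmatrix} = \im X \times \mathbb{R}^m$; comparing dimensions with the inductive hypothesis forces $\rank X = t-m$. Now the dynamics enter: for any $v \in \im X$ and $u \in \mathbb{R}^m$, one can pick $\alpha \in \mathbb{R}^t$ with $X\alpha = v$ and $U\alpha = u$, and then $x(s+1) = Ax(s) + Bu(s)$ yields $Av + Bu = \sum_{s=0}^{t-2} \alpha_s x(s+1) + \alpha_{t-1} x(t)$. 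The Case 2 hypothesis $x(t) \in \im X$ places this sum back in $\im X$, so $\im X$ is $A$-invariant and contains $\im B$. Controllability of $(A,B)$ then forces $\im X = \mathbb{R}^n$, i.e., $\rank X = n$, which contradicts $\rank X = t - m \leq T-1-m = n-1$. This contradiction establishes existence of the required covector and completes the induction.
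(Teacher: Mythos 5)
Your proof is correct and follows essentially the same route as the paper: the paper officially obtains Theorem~\ref{thmissys} as the $L=1$ case of Theorem~\ref{thmisosys}, whose proof for $L=1$ reduces to exactly your argument --- rank increases by one per step, and in the problematic case the assumption that every left-kernel vector has $\eta = 0$ yields $\im \begin{bmatrix} X^\top & U^\top\end{bmatrix}^\top = \im X \times \mathbb{R}^m$, whence $\im X$ is $A$-invariant and contains $\im B$, controllability forces $\im X = \mathbb{R}^n$, and a column/dimension count gives the contradiction $t \geq n+m$. The only cosmetic difference is that you track the exact rank $t+1$ via induction and compare dimensions, whereas the paper simply notes that a matrix with $t$ columns cannot have image $\mathbb{R}^{n+m}$ when $t < n+m$.
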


\vspace{8pt}

We will not provide a proof of Theorem~\ref{thmissys} at this stage, since the statement will follow from the more general Theorem~\ref{thmisosys} that will be proven in Section~\ref{secio}. Instead, we will explain the main ideas behind the result. The essence of Theorem~\ref{thmissys} is that the proposed input sequence increases the rank of the input/state Hankel matrix \emph{at every time step}. To be specific, the input sequence guarantees that 
\begin{equation}
\label{rankinc}
\rank \begin{bmatrix}
\calH_1(x_{[0,t]}) \\ \calH_1(u_{[0,t]})
\end{bmatrix} > \rank \begin{bmatrix}
\calH_1(x_{[0,t-1]}) \\ \calH_1(u_{[0,t-1]})
\end{bmatrix}
\end{equation}
for all $t = 1,2,\dots,T-1$. This increase in rank obviously occurs for any input $u(t) \in \mathbb{R}^m$ if $x(t) \not\in \im \calH_1(x_{[0,t-1]})$. Therefore, $u(t)$ can be chosen arbitrarily in this case. However, it is not evident that \eqref{rankinc} can be guaranteed if $x(t) \in \im \calH_1(x_{[0,t-1]})$. Here, the inportant intermediate step of Theorem~\ref{thmissys} is to show that there exists\footnote{These vectors can e.g. be computed using the singular value decomposition of the input/state Hankel matrix.} a $\xi$ and a nonzero $\eta$ satisfying \eqref{xieta}. The existence of these vectors heavily relies on controllability of $(A,B)$, and can be proven using a geometric argument that draws some inspiration from Hautus' proof of Heymann's lemma \cite{Hautus1977}. 

Once the existence of $\xi$ and $\eta \neq 0$ has been established, the construction of the input $u(t)$ should come without surprise. Indeed, choosing $u(t)$ such that $\xi^\top x(t) + \eta^\top u(t) \neq 0$ ensures that $\begin{bmatrix}
\xi^\top & \eta^\top 
\end{bmatrix}$ is not in the left kernel of the input/state Hankel matrix up to time $t$. By \eqref{xieta}, this implies that 
$$
\dim \leftker \begin{bmatrix}
\calH_1(x_{[0,t]}) \\ \calH_1(u_{[0,t]})
\end{bmatrix} < 
\dim \leftker \begin{bmatrix}
\calH_1(x_{[0,t-1]}) \\ \calH_1(u_{[0,t-1]})
\end{bmatrix},
$$
which is equivalent to \eqref{rankinc}. 

Note that to guarantee the rank condition \eqref{fullrank}, we require at least $T \geq n+m$ samples. A surprising fact is that Theorem~\ref{thmissys} always guarantees \eqref{fullrank} with \emph{exactly} $n+m$ samples, despite the a priori lack of knowledge of the system matrices $A$ and $B$. This makes our design method completely \emph{sample efficient} in the sense that any experiment design method requires at least as many samples as the one in Theorem~\ref{thmissys}. In particular, our method outperforms the usual condition of persistency of excitation of order $n+1$, which requires at least $T \geq nm+n+m$ samples to guarantee \eqref{fullrank}. Specifically, Theorem~\ref{thmissys} saves at least $nm$ samples compared to persistency of excitation.  The number of samples in Theorem~\ref{thmissys} is \emph{linear} (instead of quadratic) in the dimensions of the system variables.

We emphasize that the success of Theorem~\ref{thmissys} is due to its \emph{online} nature: the input $u(t)$ is computed by making use of past inputs and states, which contain valuable information about the unknown system. The method is thereby fundamentally different from the classical persistency of excitation condition, which is a purely \emph{offline} condition: one can design a persistently exciting input before collecting any data. In the language of Sontag \cite{Sontag1980}, persistently exciting inputs (of order $n+1$) are \emph{universal} in the sense that they guarantee \eqref{fullrank} \emph{for any} controllable system \eqref{systema}. In contrast, the input sequence of Theorem~\ref{thmissys} is tailored to the \emph{specific} system \eqref{systema} that has produced the past inputs and states. This allows for a reduction of the required number of data samples, at the small cost of some simple online computations. 

\begin{remark}
Additional constraints can be considered on the input $u(t)$ in Theorem~\ref{thmissys}. For example, one can incorporate the norm constraint $\norm{u(t)} = \delta$ by selecting 
$$
u(t) = \begin{cases} \frac{\delta \eta}{\norm{\eta}} & \text{ if } \xi^\top x(t) \geq 0 \\
-\frac{\delta \eta}{\norm{\eta}} & \text{ otherwise,} 
\end{cases}
$$
since this $u(t)$ satisfies $\xi^\top x(t) + \eta^\top u(t) \neq 0$. Theorem~\ref{thmissys} also demonstrates that a sequence of independent and normally distributed random inputs is sample efficient with probability 1. This is because the set $\mathcal{S} = \{u(t) \mid \xi^\top x(t) + \eta^\top u(t) = 0\}$ is a proper affine subspace of $\mathbb{R}^m$. However, we emphasize that Theorem~\ref{thmissys} deals with a general class of inputs that yields absolute (rather than probabilistic) guarantees.
\end{remark}

\subsection{Input design for input/output systems}
\label{secio}
Next, we turn our attention to input/output data generated by system \eqref{system}. In this setting, we want to find a sequence of inputs $u(0),u(1),\dots,u(T-1)$ so that the matrix \eqref{inputoutputHankel} has rank $n+mL$. As before, we opt for the \emph{online} design of these inputs, meaning that we select $u(t)$ on the basis of the collected outputs $y(0),y(1),\dots,y(t-1)$ and inputs $u(0),u(1),\dots,u(t-1)$. 

The following theorem is a building block in our approach, and shows how full row rank of the \emph{input/state} Hankel matrix 
\begin{equation}
\label{fullrankWillems}
   \begin{bmatrix} \mathcal{H}_{1}(x_{[0,T-L]}) \\ \mathcal{H}_{L}(u_{[0,T-1]}) \end{bmatrix} = 
    \begingroup 
    \setlength\arraycolsep{2pt}
    \begin{bmatrix} x(0) & x(1) & \cdots & x(T-L) \\ u(0) & u(1) & \cdots & u(T-L) \\ \vdots & \vdots &  & \vdots \\ u(L-1) & u(L) & \cdots & u(T-1) \end{bmatrix}
    \endgroup 
\end{equation}
can be guaranteed if the state of \eqref{systema} is measured.

\begin{theorem}
	\label{thmisosys}
	Consider the controllable system \eqref{systema}. Define $T := n+(m+1)L-1$ for $L \geq 1$. Choose $u(0),u(1),\dots,u(L-1) \in \mathbb{R}^m$ arbitrarily, but not all zero. Design $u(t)$ (for $t = L,L+1,\dots,T-1$) as follows:
	\begin{itemize}
		\item If 
		\begin{equation}
		\label{notinc}
		\begin{bmatrix}
		x(t-L+1) \\ u_{[t-L+1,t-1]}
		\end{bmatrix} \not\in \im \begin{bmatrix}
		\calH_1(x_{[0,t-L]}) \\ \calH_{L-1}(u_{[0,t-2]})
		\end{bmatrix},
		\end{equation}
		then choose $u(t) \in \mathbb{R}^m$ arbitrarily. 
		\item If 
		\begin{equation}
		\label{inc}
		\begin{bmatrix}
		x(t-L+1) \\ u_{[t-L+1,t-1]}
		\end{bmatrix} \in \im \begin{bmatrix}
		\calH_1(x_{[0,t-L]}) \\ \calH_{L-1}(u_{[0,t-2]})
		\end{bmatrix},
		\end{equation}
		then there exist $\xi \in \mathbb{R}^n$ and $\eta_1,\eta_2,\dots,\eta_{L} \in \mathbb{R}^m$ with $\eta_{1} \neq 0$ such that
		\begin{equation}
		\label{xietas}
		\begin{bmatrix} \xi^\top & \eta_L^\top & \cdots & \eta_2^\top & \eta_{1}^\top \end{bmatrix} \begin{bmatrix}
		\calH_1(x_{[0,t-L]}) \\ \calH_L(u_{[0,t-1]})
		\end{bmatrix} = 0. 
		\end{equation}
		In this case, choose $u(t)$ such that 
		$$
		\xi^\top x(t-L+1) + \eta_L^\top u(t-L+1) + \cdots + \eta_{1}^\top u(t) \neq 0.
		$$ 		
	\end{itemize}
	Then, the sequence $u(0),u(1),\dots,u(T-1)$ is such that 
	\begin{equation}
	\label{fullrankL}
	\rank \begin{bmatrix}
	\calH_1(x_{[0,T-L]}) \\ \calH_L(u_{[0,T-1]})
	\end{bmatrix} = n+mL.
	\end{equation}
\end{theorem}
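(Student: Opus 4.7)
The plan is to follow the template of Theorem~\ref{thmissys}: show that each design step $t=L,L+1,\dots,T-1$ increases the rank of the intermediate Hankel matrix $M^{(t)} := \begin{bmatrix}\calH_1(x_{[0,t-L+1]}) \\ \calH_L(u_{[0,t]})\end{bmatrix}$ by exactly one. The base case $M^{(L-1)}$ is the single column $\bigl[x(0)^\top,\,u_{[0,L-1]}^\top\bigr]^\top$, which is nonzero since $u(0),\dots,u(L-1)$ are not all zero; hence $\rank M^{(L-1)}=1$. After the $T-L = n+mL-1$ design steps, the rank reaches $1+(n+mL-1)=n+mL$, which is precisely \eqref{fullrankL}.

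For the inductive step the two branches of the design rule are handled separately. Case~1 (condition \eqref{notinc}) is immediate: the truncated new column $\bigl[x(t-L+1)^\top,\,u_{[t-L+1,t-1]}^\top\bigr]^\top$ already lies outside the image of the top $n+(L-1)m$ rows of $M^{(t-1)}$, so the full new column misses $\im M^{(t-1)}$ for any $u(t)$. In Case~2 (condition \eqref{inc}), the key technical claim is the existence of $(\xi,\eta_L,\dots,\eta_2,\eta_1)\in\leftker M^{(t-1)}$ with $\eta_1\neq 0$; granted this, the prescribed $u(t)$ makes this row vector leave $\leftker M^{(t)}$, so the left kernel strictly shrinks and the rank increases by one.

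The main obstacle is establishing this existence claim, which I would prove by contradiction. Assuming that every element of $\leftker M^{(t-1)}$ has $\eta_1=0$ and taking orthogonal complements gives $\im M^{(t-1)}=V\times\mathbb{R}^m$, where $V:=\im N$ and $N := \begin{bmatrix}\calH_1(x_{[0,t-L]}) \\ \calH_{L-1}(u_{[0,t-2]})\end{bmatrix}$. To couple this with assumption \eqref{inc} and the dynamics, I introduce the shift-and-update map $\Phi(x,u_0,\dots,u_{L-1}):=(Ax+Bu_0,u_1,\dots,u_{L-1})$; since $x(j+1)=Ax(j)+Bu(j)$, $\Phi$ sends each column of $M^{(t-1)}$ to a column of the shifted matrix $\begin{bmatrix}\calH_1(x_{[1,t-L+1]}) \\ \calH_{L-1}(u_{[1,t-1]})\end{bmatrix}$. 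All columns of this shifted matrix lie in $V$: all but the last are columns of $N$ themselves, and the last, $\bigl[x(t-L+1)^\top,u_{[t-L+1,t-1]}^\top\bigr]^\top$, belongs to $V$ exactly by \eqref{inc}. Hence $\Phi(V\times\mathbb{R}^m)\subseteq V$.

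A Heymann-style iteration then yields the contradiction. Decomposing a generic element $\Phi(v,w)$ into its dynamical part and its free last-block component $w\in\mathbb{R}^m$ first forces $\{0\}_n\times\mathbb{R}^{(L-1)m}\subseteq V$, and then, by iterating and feeding the dynamical output back into the state slot, one obtains $(\im B+\im AB+\cdots+\im A^{k}B)\times\mathbb{R}^{(L-1)m}\subseteq V$ for every $k\geq 0$. Controllability of $(A,B)$ collapses the iterated sum to $\mathbb{R}^n$, so $V=\mathbb{R}^{n+(L-1)m}$ and $\im M^{(t-1)}=\mathbb{R}^{n+Lm}$. This forces $t-L+1\geq n+Lm$, i.e.\ $t\geq T$, contradicting $t\leq T-1$ and completing the argument.
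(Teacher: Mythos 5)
Your proof is correct, and while it shares the paper's overall skeleton (rank increment at every step, a contradiction argument for the existence of a left-kernel vector with $\eta_1 \neq 0$, and controllability to close the contradiction), the core of the contradiction is organized genuinely differently. The paper starts from the same identity $\im M^{(t-1)} = V \times \mathbb{R}^m$ (its \eqref{contraL}) but then runs an inner induction over the block index, showing via \eqref{contraLk} that \emph{every} component $\eta_1,\dots,\eta_L$ of a left-kernel element must vanish, so that $\im M^{(t-1)} = \im \calH_1(x_{[0,t-L]}) \times \mathbb{R}^{mL}$; only then does it apply $\begin{bmatrix} A & B & 0\end{bmatrix}$ once to conclude that $\im \calH_1(x_{[0,t-L]})$ is $A$-invariant and contains $\im B$, hence equals $\mathbb{R}^n$ by minimality of the reachable subspace. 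You instead skip the inner induction entirely and iterate the shift-and-update map $\Phi$ on the single identity $\im M^{(t-1)} = V\times\mathbb{R}^m$, using \eqref{inc} only to certify that the last shifted column stays in $V$, and directly grow $(\im B + \im AB + \cdots + \im A^kB)\times\mathbb{R}^{(L-1)m}$ inside $V$. Your route makes the role of controllability more constructive (the reachable subspace is built up explicitly rather than invoked as the smallest invariant subspace containing $\im B$) and merges the paper's two stages into one iteration, at the price of slightly heavier bookkeeping about which input blocks are ``free'' after each application of $\Phi$; the preliminary step $\{0\}_n\times\mathbb{R}^{(L-1)m}\subseteq V$ does itself require $L-1$ iterations of $\Phi$ starting from $0\in V$, which you gesture at but should spell out. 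The paper's version buys a cleaner reusable intermediate statement (all $\eta_i$ vanish), which it then recycles verbatim in the proof of Theorem~\ref{thmiosys}. Both arguments land on the same dimension count $t-L+1 \geq n+mL$, i.e.\ $t \geq T$, and the contradiction.
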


\vspace{5pt}
\begin{proof}
	Since $u(0),u(1),\dots,u(L-1)$ are not all zero, we have
	$$
	\rank \begin{bmatrix}
	x(0) \\ u_{[0,L-1]} 
	\end{bmatrix} = 1. 
	$$
	The idea of the proof is to show that in each time step, we can increase the rank of the Hankel matrix. That is, for each $t = L,L+1,\dots,T-1$, we want to prove that
	\begin{equation}
	\label{rankincL}
	\rank \begin{bmatrix}
	\calH_1(x_{[0,t-L+1]}) \\ \calH_L(u_{[0,t]})
	\end{bmatrix} > \rank \begin{bmatrix}
	\calH_1(x_{[0,t-L]}) \\ \calH_L(u_{[0,t-1]})
	\end{bmatrix}.
	\end{equation}
	Let $t \in \{L,L+1,\dots,T-1\}$. First consider the case that \eqref{notinc} holds. Clearly, \eqref{rankincL} is satisfied for any $u(t) \in \mathbb{R}^m$. Next, consider the case that \eqref{inc} holds. We will prove by contradiction that there exist vectors $\xi$ and $\eta_1,\eta_2,\dots,\eta_L$ with $\eta_1 \neq 0$ satisfying \eqref{xietas}. If such vectors do not exist then  
	\begin{equation}
	\label{contraL}
	\leftker \begin{bmatrix}
	\calH_1(x_{[0,t-L]}) \\ \calH_L(u_{[0,t-1]})
	\end{bmatrix} =
	\leftker \begin{bmatrix} \calH_1(x_{[0,t-L]}) \\ \calH_{L-1}(u_{[0,t-2]}) \end{bmatrix} \times \{0_m\}.
	\end{equation}
	We claim that this implies that
	\begin{equation}
	\label{contraLk}
	\leftker \! \begin{bmatrix}
	\calH_1(x_{[0,t-L]}) \\ \calH_L(u_{[0,t-1]})
	\end{bmatrix} \! = \!
	\leftker \! \begin{bmatrix} \calH_1(x_{[0,t-L]}) \\ \calH_{L-i}(u_{[0,t-i-1]}) \end{bmatrix}  \times  \{0_{im}\}
	\end{equation}
	for all $i = 1,\dots,L$. By hypothesis, \eqref{contraLk} holds for $i = 1$. Suppose that \eqref{contraLk} holds for some $1 \leq i = k < L$. Our goal is to show that \eqref{contraLk} holds for $i = k+1$ as well. 
	Let 
	\begin{equation}
	\label{leftker}
	\begin{bmatrix}
	\xi^\top & \eta_L^\top & \cdots & \eta_{1}^\top
	\end{bmatrix} \begin{bmatrix}
	\calH_1(x_{[0,t-L]}) \\ \calH_L(u_{[0,t-1]})
	\end{bmatrix} = 0,
	\end{equation}
	where $\xi \in \mathbb{R}^n$ and $\eta_1,\dots,\eta_{L} \in \mathbb{R}^m$. By the induction hypothesis, we have $\eta_{1} = \cdots = \eta_k = 0$ so that
	\begin{equation}
	\label{kernelk+1}
	\begin{bmatrix}
	\xi^\top & \eta_L^\top & \cdots & \eta_{k+1}^\top
	\end{bmatrix} \begin{bmatrix}
	\calH_1(x_{[0,t-L]}) \\ \calH_{L-k}(u_{[0,t-k-1]})
	\end{bmatrix} = 0.
	\end{equation} 
	By \eqref{inc}, we obtain 
	$$
	\begin{bmatrix}
		x(t-L+1) \\ u_{[t-L+1,t-k]}
		\end{bmatrix} \in \im \begin{bmatrix}
		\calH_1(x_{[0,t-L]}) \\ \calH_{L-k}(u_{[0,t-k-1]})
		\end{bmatrix}.
	$$
	Combining the latter inclusion with \eqref{kernelk+1} yields 
	\begin{equation}
	\label{kernelk+12}
	\begin{bmatrix}
	\xi^\top & \eta_L^\top & \cdots & \eta_{k+1}^\top
	\end{bmatrix} \begin{bmatrix}
	\calH_1(x_{[0,t-L+1]}) \\ \calH_{L-k}(u_{[0,t-k]})
	\end{bmatrix} = 0.
	\end{equation}
	Note that the laws of system \eqref{systema} imply that
	$$
	\begin{bmatrix}
	\calH_1(x_{[1,t-L+1]}) \\ \calH_{L-k}(u_{[1,t-k]})
	\end{bmatrix} = 
	\begin{bmatrix}
	A & B & 0 \\ 0 & 0 & I
	\end{bmatrix}
	\begin{bmatrix}
	\calH_1(x_{[0,t-L]}) \\ \calH_{L-k+1}(u_{[0,t-k]})
	\end{bmatrix}.
	$$
	Therefore, \eqref{kernelk+12} implies 
	$$
	\begin{bmatrix}
	\xi^\top A & \xi^\top B & \eta_L^\top & \cdots & \eta_{k+1}^\top
	\end{bmatrix} \begin{bmatrix}
	\calH_1(x_{[0,t-L]}) \\ \calH_{L-k+1}(u_{[0,t-k]})
	\end{bmatrix} = 0.
	$$
	This yields 
	$$
	\begin{bmatrix}
	\xi^\top A \! & \xi^\top B \! & \eta_L^\top & \! \cdots \! & \eta_{k+1}^\top \! & 0_{(k-1)m}^\top 
	\end{bmatrix} \!\! \begin{bmatrix}
	\calH_1(x_{[0,t-L]}) \\ \calH_{L}(u_{[0,t-1]})
	\end{bmatrix} \! = 0.
	$$
	Finally, by the induction hypothesis we see that $\eta_{k+1} = 0$, as desired. Thus, \eqref{contraLk} holds for all $i = 1,\dots,L$. In particular, for $i = L$ we obtain  
	$$
	\leftker \begin{bmatrix}
	\calH_1(x_{[0,t-L]}) \\ \calH_L(u_{[0,t-1]})
	\end{bmatrix} = \leftker \calH_1(x_{[0,t-L]}) \times \{0_{mL}\},
	$$  
	equivalently (using the fact that $(\im X)^\perp = \leftker X$), 
	\begin{equation}
	\label{imagemL}
	\im \begin{bmatrix}
	\calH_1(x_{[0,t-L]}) \\ \calH_L(u_{[0,t-1]})
	\end{bmatrix} = \im \calH_1(x_{[0,t-L]}) \times \mathbb{R}^{mL}.
	\end{equation}
	Multiplication on both sides by the matrix $\begin{bmatrix} A & B & 0 \end{bmatrix}$ yields
	$$
	A \im \calH_1(x_{[0,t-L]}) + \im B = \im \calH_1(x_{[1,t-L+1]}).
	$$
	Using \eqref{inc}, we obtain 
	$$
	A \im \calH_1(x_{[0,t-L]}) + \im B \subseteq \im \calH_1(x_{[0,t-L]}).
	$$
	In particular, we see that 
	\begin{align*}
	A \im \calH_1(x_{[0,t-L]}) &\subseteq \im \calH_1(x_{[0,t-L]}) \\ 
	\im B &\subseteq \im \calH_1(x_{[0,t-L]}).
	\end{align*}
	In other words, $\im \calH_1(x_{[0,t-L]})$ is an $A$-invariant subspace containing $\im B$. Since the reachable subspace $\langle A \mid \im B\rangle$ of the pair $(A,B)$ is the \emph{smallest} $A$-invariant subspace containing $\im B$ (cf. \cite[Ch. 3]{Trentelman2001}), we see that
	$$
	\mathbb{R}^n = \langle A \mid \im B\rangle \subseteq \im \calH_1(x_{[0,t-L]}),
	$$
	where we made use of the fact that $(A,B)$ is controllable. As such, $\im \calH_1(x_{[0,t-L]}) = \mathbb{R}^n$ and by \eqref{imagemL} we see that
	$$
	\im \begin{bmatrix}
	\calH_1(x_{[0,t-L]}) \\ \calH_L(u_{[0,t-1]})
	\end{bmatrix} = \mathbb{R}^{n+mL}.
	$$
	This implies that $t \geq n+(m+1)L-1$ which leads to a contradiction as $t \in \{L,L+1,\dots,T-1\}$. As such, we conclude that \eqref{contraL} does not hold. Therefore, there exist $\xi \in \mathbb{R}^n$ and $\eta_1,\eta_2,\dots,\eta_{L} \in \mathbb{R}^m$ with $\eta_{1} \neq 0$ such that \eqref{xietas} holds. Clearly, this means that there exists a $u(t)$ such that 
	$$
	\xi^\top x(t-L+1) + \eta_L^\top u(t-L+1) + \cdots + \eta_{1}^\top u(t) \neq 0.
	$$ 		 
	For such an input, we have that
	$$
	\dim \leftker \begin{bmatrix}
	\calH_1(x_{[0,t-L]}) \\ \calH_L(u_{[0,t-1]})
	\end{bmatrix} >
	\dim \leftker \begin{bmatrix}
	\calH_1(x_{[0,t-L+1]}) \\ \calH_L(u_{[0,t]})
	\end{bmatrix},
	$$
	and consequently, \eqref{rankincL} holds. This proves the theorem. 
\end{proof}

Note that in the special case $L = 1$, Theorem~\ref{thmisosys} reduces to Theorem~\ref{thmissys}. Next, we turn our attention to the situation in which we measure inputs and outputs and want to ensure that \eqref{inputoutputHankel} has rank $n + mL$. This is the topic of the next theorem. 

\begin{theorem}
	\label{thmiosys}
	Consider the minimal system \eqref{system}. Let $L\! >\! \ell$ and $T \!:=\! n+(m+1)L-1$. Select $u(0),u(1),\dots,u(L-1) \in \mathbb{R}^m$ arbitrarily, but not all zero. Furthermore, design $u(t)$ (for $t = L,L+1,\dots,T-1$) as follows:
	\begin{itemize}
		\item If 
		\begin{equation}
		\label{notincio}
		\begin{bmatrix}
		y_{[t-L+1,t-1]} \\ u_{[t-L+1,t-1]}
		\end{bmatrix} \not\in \im \begin{bmatrix}
		\calH_{L-1}(y_{[0,t-2]}) \\ \calH_{L-1}(u_{[0,t-2]})
		\end{bmatrix},
		\end{equation}
		then choose $u(t) \in \mathbb{R}^m$ arbitrarily. 
		\item If 
		\begin{equation}
		\label{incio}
		\begin{bmatrix}
		y_{[t-L+1,t-1]} \\ u_{[t-L+1,t-1]}
		\end{bmatrix} \in \im \begin{bmatrix}
		\calH_{L-1}(y_{[0,t-2]}) \\ \calH_{L-1}(u_{[0,t-2]})
		\end{bmatrix},
		\end{equation}
		there exist $\xi_1,\dots,\xi_{L-1} \in \mathbb{R}^p$ and $\eta_1,\dots,\eta_{L} \in \mathbb{R}^m$ with $\eta_{1} \neq 0$ such that
		\begin{equation}
		\label{xietasio}
		\begin{bmatrix} \xi_{L-1}^\top & \! \cdots \! & \xi_1^\top & \eta_L^\top & \! \cdots \! & \eta_{1}^\top \end{bmatrix} \begin{bmatrix}
		\calH_{L-1}(y_{[0,t-2]}) \\ \calH_L(u_{[0,t-1]})
		\end{bmatrix} = 0. 
		\end{equation}
		In this case, choose $u(t)$ such that
		\begin{equation}
		\label{xietaio}
		\begin{aligned}
		\xi_{L-1}^\top &y(t-L+1) + \cdots + \xi_1^\top y(t-1) \\ + \eta_L^\top &u(t-L+1) + \cdots + \eta_{1}^\top u(t) \neq 0.
		\end{aligned}
		\end{equation}
			\end{itemize} 		
	Then we have that
	\begin{equation}
	\label{rankLio}
	\rank \begin{bmatrix}
	\calH_L(y_{[0,T-1]}) \\ \calH_L(u_{[0,T-1]})
	\end{bmatrix} = n+mL.
	\end{equation}
\end{theorem}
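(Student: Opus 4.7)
The plan is to reduce Theorem~\ref{thmiosys} to Theorem~\ref{thmisosys} via the state-space factorization of outputs. Since $L > \ell$, we have $L-1 \geq \ell$, so the observability matrix $\mathcal{O}_{L-1}$ defined in \eqref{observabilitymatrix} has full column rank $n$. I would introduce the block matrix
\[
M := \begin{bmatrix} \mathcal{O}_{L-1} & \mathcal{T}_{L-1} & 0 \\ 0 & I_{(L-1)m} & 0 \\ 0 & 0 & I_m \end{bmatrix},
\]
which has full column rank $n + mL$, together with its top-left submatrix $\tilde M$ of full column rank $n+(L-1)m$ obtained by dropping the last block row and column. A column-by-column application of \eqref{system} yields
\[
\begin{bmatrix} \calH_{L-1}(y_{[0,t-2]}) \\ \calH_L(u_{[0,t-1]}) \end{bmatrix} = M \begin{bmatrix} \calH_1(x_{[0,t-L]}) \\ \calH_L(u_{[0,t-1]}) \end{bmatrix}
\]
and, applied to a single column, $\begin{bmatrix} y_{[t-L+1,t-1]} \\ u_{[t-L+1,t]} \end{bmatrix} = M \begin{bmatrix} x(t-L+1) \\ u_{[t-L+1,t]} \end{bmatrix}$. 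Analogous identities through $\tilde M$ link the pairs appearing in \eqref{incio} and \eqref{inc}.

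Since $\tilde M$ is injective, condition \eqref{incio} is equivalent to condition \eqref{inc}, and likewise for their negations. Hence the branch in which $u(t)$ is chosen arbitrarily in Theorem~\ref{thmiosys} aligns with the arbitrary branch of Theorem~\ref{thmisosys}. For the remaining branch, given $(\xi_{L-1}, \dots, \xi_1, \eta_L, \dots, \eta_1)$ as in \eqref{xietasio} with $\eta_1 \neq 0$, I would define
\[
\begin{bmatrix} \xi'^\top & \bar\eta_L^\top & \cdots & \bar\eta_1^\top \end{bmatrix} := \begin{bmatrix} \xi_{L-1}^\top & \cdots & \xi_1^\top & \eta_L^\top & \cdots & \eta_1^\top \end{bmatrix} M.
\]
Because the last block column of $M$ is $\begin{bmatrix} 0 & 0 & I_m \end{bmatrix}^\top$, one immediately reads off $\bar\eta_1 = \eta_1 \neq 0$. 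Left-multiplying the first factorization by this row vector converts \eqref{xietasio} into the left-kernel condition \eqref{xietas} for $(\xi', \bar\eta_L, \dots, \bar\eta_1)$; left-multiplying the single-column factorization shows that the scalar in \eqref{xietaio} equals $\xi'^\top x(t-L+1) + \bar\eta_L^\top u(t-L+1) + \cdots + \bar\eta_1^\top u(t)$. So any $u(t)$ satisfying \eqref{xietaio} is admissible under Theorem~\ref{thmisosys}'s design rule for the translated vectors.

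With both branches matched and the initial inputs $u(0),\dots,u(L-1)$ chosen identically in both theorems, Theorem~\ref{thmisosys} applies and gives $\rank \begin{bmatrix} \calH_1(x_{[0,T-L]}) \\ \calH_L(u_{[0,T-1]}) \end{bmatrix} = n+mL$. To pass back to outputs, I would use the identity
\[
\begin{bmatrix} \calH_L(y_{[0,T-1]}) \\ \calH_L(u_{[0,T-1]}) \end{bmatrix} = \begin{bmatrix} \mathcal{O}_L & \mathcal{T}_L \\ 0 & I_{Lm} \end{bmatrix} \begin{bmatrix} \calH_1(x_{[0,T-L]}) \\ \calH_L(u_{[0,T-1]}) \end{bmatrix},
\]
where the leftmost factor has full column rank $n+mL$ since $L \geq \ell$. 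Therefore the left-hand side also has rank $n+mL$, which is \eqref{rankLio}. I anticipate that the main obstacle is the block-structure bookkeeping needed to verify $\bar\eta_1 = \eta_1$; everything else is a routine consequence of $M$ and $\tilde M$ being injective, a property that relies essentially on the strict inequality $L > \ell$.
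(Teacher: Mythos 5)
Your reduction-to-Theorem~\ref{thmisosys} strategy is sound and is in fact a more modular route than the paper's own proof: the paper does not invoke Theorem~\ref{thmisosys} as a black box but re-runs the rank-increase induction directly on the output-level matrix $\bigl[\begin{smallmatrix}\calH_{L-1}(y_{[0,t-1]})\\ \calH_L(u_{[0,t]})\end{smallmatrix}\bigr]$, transferring only the crucial contradiction step to the state level via exactly the factorization matrices you introduce (its $N$ is your $\tilde M$). Your forward translation of the vectors in \eqref{xietasio} through $M$, the identification $\bar\eta_1=\eta_1$ from the last block column, the equivalence of \eqref{incio} and \eqref{inc} via injectivity of $\tilde M$, and the final passage back to \eqref{rankLio} through the full-column-rank factor $\bigl[\begin{smallmatrix}\mathcal{O}_L & \mathcal{T}_L\\ 0 & I\end{smallmatrix}\bigr]$ are all correct.

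There is, however, one genuine omission: you never prove the existence claim in the theorem statement, namely that under \eqref{incio} there actually \emph{exist} $\xi_1,\dots,\xi_{L-1}$ and $\eta_1,\dots,\eta_L$ with $\eta_1\neq 0$ satisfying \eqref{xietasio}. You write ``given $(\xi_{L-1},\dots,\eta_1)$ as in \eqref{xietasio} with $\eta_1\neq 0$,'' which presupposes precisely the assertion that carries all the difficulty of the theorem --- it is where controllability enters, and it is what the bulk of the paper's proof (the contradiction argument culminating in $t\geq n+(m+1)L-1$) is devoted to. Without it, the design rule in the second branch is not known to be executable, and the theorem is not proved. Fortunately the gap closes with the machinery you already have: Theorem~\ref{thmisosys} guarantees a state-level annihilator $(\xi',\bar\eta_L,\dots,\bar\eta_1)$ with $\bar\eta_1\neq 0$ satisfying \eqref{xietas}; since $M$ has full column rank, the map $w\mapsto M^\top w$ is surjective, so you can choose $(\xi_{L-1},\dots,\xi_1,\eta_L,\dots,\eta_1)$ with $\bigl[\xi_{L-1}^\top \cdots\ \eta_1^\top\bigr]M=\bigl[\xi'^\top\ \bar\eta_L^\top\cdots\ \bar\eta_1^\top\bigr]$; the factorization of the output Hankel matrix through $M$ then yields \eqref{xietasio}, and the structure of the last block column of $M$ forces $\eta_1=\bar\eta_1\neq 0$. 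Add that pullback and your proof is complete.
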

\vspace{10pt}

\begin{proof}
	Since $u(0),u(1),\dots,u(L-1)$ are not all zero, we have 
	$$
	\rank \begin{bmatrix}
	y_{[0,L-2]} \\ u_{[0,L-1]} 
	\end{bmatrix} = 1. 
	$$
	The idea of the proof is to show that for each time step $t = L,L+1,\dots,T-1$, we have
	\begin{equation}
	\label{rankincLio}
	\rank \begin{bmatrix}
	\calH_{L-1}(y_{[0,t-1]}) \\ \calH_L(u_{[0,t]})
	\end{bmatrix} > \rank \begin{bmatrix}
	\calH_{L-1}(y_{[0,t-2]}) \\ \calH_L(u_{[0,t-1]})
	\end{bmatrix}.
	\end{equation}
	This would prove that 
	$$
	\rank \begin{bmatrix}
	\calH_{L-1}(y_{[0,T-2]}) \\ \calH_L(u_{[0,T-1]})
	\end{bmatrix} = n+mL,
	$$
	and consequently, \eqref{rankLio} holds. 
	
	Let $t \in \{L,L+1,\dots,T-1\}$. First consider the case that \eqref{notincio} holds. Clearly, \eqref{rankincLio} is satisfied for any $u(t) \in \mathbb{R}^m$.
	
	Next, consider the case that \eqref{incio} holds. We claim that 
	\begin{equation}
	\label{claimLio}
	\leftker \! \begin{bmatrix}
	\calH_{L-1}(y_{[0,t-2]}) \\ \calH_L(u_{[0,t-1]})
	\end{bmatrix} \neq \leftker \! \begin{bmatrix} \calH_{L-1}(y_{[0,t-2]}) \\ \calH_{L-1}(u_{[0,t-2]}) \end{bmatrix} \times \{0_m\}.
	\end{equation}
	We will prove this claim by contradiction. Thus, suppose that \eqref{claimLio} does hold with equality, equivalently,
	\begin{equation}
	\label{imeqio}
	\im \begin{bmatrix}
	\calH_{L-1}(y_{[0,t-2]}) \\ \calH_L(u_{[0,t-1]})
	\end{bmatrix} = \im \begin{bmatrix} \calH_{L-1}(y_{[0,t-2]}) \\ \calH_{L-1}(u_{[0,t-2]}) \end{bmatrix} \times \mathbb{R}^m.
	\end{equation} 
Define the matrices $N$ and $M$ as
	$$
	N := \begin{bmatrix}
	\mathcal{O}_{L-1} & \mathcal{T}_{L-1} \\
	0 & I
	\end{bmatrix} \text{ and }
	M := \begin{bmatrix}
	N & 0 \\
	0 & I_m
	\end{bmatrix},
	$$
	where the observability matrix $\mathcal{O}_i$ is defined in \eqref{observabilitymatrix} and the Toeplitz matrix $\mathcal{T}_{i}$ is given by 
	\begin{align*}
	\mathcal{T}_i := \begin{bmatrix}
	D & 0 & 0 &  \cdots & 0 \\
	CB & D &  0 & \cdots & 0 \\
	CAB & CB & D & \cdots & 0 \\
	\vdots & \vdots & \vdots & \ddots & \vdots \\
	CA^{i-2}B & CA^{i-3}B & CA^{i-4}B & \cdots & D 
	\end{bmatrix}.
	\end{align*}	
	Then \eqref{imeqio} implies 
	$$
	M \im \begin{bmatrix}
	\calH_1(x_{[0,t-L]}) \\ \calH_L(u_{[0,t-1]})
	\end{bmatrix} = M \left( \im \begin{bmatrix}
	\calH_1(x_{[0,t-L]}) \\ \calH_{L-1}(u_{[0,t-2]})
	\end{bmatrix} \times \mathbb{R}^m \right).
	$$
	Since $(C,A)$ is observable and $L > \ell$, the matrix $M$ has full column rank. As such, we conclude that 
	$$
	\im \begin{bmatrix}
	\calH_1(x_{[0,t-L]}) \\ \calH_L(u_{[0,t-1]})
	\end{bmatrix} = \im \begin{bmatrix}
	\calH_1(x_{[0,t-L]}) \\ \calH_{L-1}(u_{[0,t-2]})
	\end{bmatrix} \times \mathbb{R}^m,
	$$
	i.e., \eqref{contraL} holds. Similarly, we note that \eqref{incio} implies
	$$
	N \begin{bmatrix}
	x(t-L+1) \\ u_{[t-L+1,t-1]}
\end{bmatrix} \in N \im \begin{bmatrix}
	\mathcal{H}_1(x_{[0,t-L]} \\
	\mathcal{H}_{L-1}(u_{[0,t-2]})
	\end{bmatrix}.
	$$	
	Since $N$ has full column rank, this implies that \eqref{inc} holds. It now follows from the proof of Theorem~\ref{thmisosys} that $t$ satisfies $t \geq n+(m+1)L-1$. This yields a contradiction since $t \in \{L,L+1,\dots,T-1\}$. As such, \eqref{claimLio} holds. Therefore, there exist $\xi_1,\xi_2,\dots,\xi_{L-1} \in \mathbb{R}^p$ and $\eta_1,\eta_2,\dots,\eta_{L} \in \mathbb{R}^m$ with $\eta_{1} \neq 0$ such that \eqref{xietasio} is satisfied. This means that we can choose an input $u(t)$ such that \eqref{xietaio} holds. 
	For this choice of $u(t)$, we obtain 
	$$
	\dim \leftker \! \begin{bmatrix}
	\calH_{L-1}(y_{[0,t-2]}) \\ \calH_L(u_{[0,t-1]})
	\end{bmatrix} \!>\!
	\dim \leftker \! \begin{bmatrix}
	\calH_{L-1}(y_{[0,t-1]}) \\ \calH_L(u_{[0,t]})
	\end{bmatrix}.
	$$	
	We conclude that \eqref{rankincLio} holds, which proves the theorem.
\end{proof}

We note that \eqref{rankLio} can only hold if $T \geq n+(m+1)L-1$. By Theorem \ref{thmiosys}, we can always design an informative experiment of length \emph{exactly} $n+(m+1)L-1$. In other words, the design procedure of Theorem \ref{thmiosys} is again sample efficient. In comparison, the usual persistency of excitation condition of order $n+L$ (Proposition~\ref{theoremWillems}) requires $T \geq n+(m+1)L+mn-1$ samples. Theorem~\ref{thmiosys} thus saves at least $mn$ samples when compared to persistency of excitation.

Finally, is it noteworthy that exact knowledge of the state-space dimension is \emph{not required} for Theorem~\ref{thmiosys}, even though the time horizon $T$ is defined in terms of $n$. The only requirement is an upper bound $L > \ell$ on the lag of the system. The reason is that all the steps of Theorem~\ref{thmiosys} can be executed without knowledge of $n$. One should only take some care with the stopping criterion since $T$ is a priori unknown. A simple approach to deal with this is the following: apply the steps of Theorem~\ref{thmiosys} for $t = 0,1,\dots$ until $t$ is such that \eqref{imeqio} is satisfied. By the proof of Theorem~\ref{thmiosys}, the smallest $t$ for which \eqref{imeqio} holds is $t = n+(m+1)L-1$. From this relation, $n$ can be recovered, and for $T := t$, the rank condition \eqref{rankLio} holds. 

\section{Conclusions and discussion} 
\label{sec:conc}
The purpose of this paper has been to provide a new angle of attack to the problem of input design for data-driven modeling and control of linear systems. Instead of the classical persistency of excitation condition, we have proposed an \emph{online} method to select the inputs. We have studied both input/state and input/output systems. For both types of systems, we have provided an input design method that guarantees an important rank condition on the data Hankel matrix. A notable feature of our approach is that it is completely \emph{sample efficient} in the sense that it guarantees these rank conditions with the minimum number of samples.

The presented results apply to exact data, and the generalization to noisy data is arguably the most important problem for future work. In the case of noisy measurements, rank conditions on Hankel matrices are not sufficient to perform succesful data-driven modeling and control. Nonetheless, under suitable conditions, controllers with guaranteed stability and performance can be obtained from data. For this, data-based linear matrix inequality (LMI) conditions were studied in \cite{DePersis2020} and \cite{Berberich2020}, and a necessary and sufficient condition was provided in \cite{vanWaarde2020d} using a generalization of Yakubovich's S-lemma \cite{Yakubovich1977}.

Future work could thus focus on experiment design techniques that aim at making such data-based LMI's feasible. One possible avenue to explore is to apply the repeated dimension reduction argument in this paper to other sets. In the current work, we have shown that it is possible to reduce the dimension of the \emph{left kernel} of a Hankel matrix at every time step. However, in the above context of data-driven control, a fundamental role is played by the solution set to the \emph{dual LMI} \cite{Balakrishnan2003} of the data-based LMI's for control. This dual LMI is infeasible if and only if its primal LMI is feasible. Thus, a systematic reduction of the dimension of the dual solution set would guarantee feasibility of the data-based LMI's in a finite number of steps. Constructing inputs that achieve this is a main challenge for future research.

\bibliographystyle{IEEEtran}
\bibliography{references}

\end{document}